\definecolor{red}{rgb}{0,0.1,5}
\begin{document}

\newtheorem*{problem}{Problem}
\newtheorem{theorem}{Theorem}
\newtheorem{corollary}{Corollary}
\newtheorem{lemma}{Lemma}
\newtheorem{proposition}{Proposition}
\theoremstyle{definition}
\newtheorem*{definition}{Definition}
\newtheorem*{remark}{Remark}
\newtheorem*{example}{Example}
\numberwithin{equation}{section}
\everymath{\displaystyle}
\newcommand{\uno}{\mathbbm{1}}
\newcommand{\alocc}[2]{|\!|#1|\!|_{#2}}
\newcommand{\occ}[2]{|#1|_{#2}}
\newcommand{\eps}{\varepsilon}
\newcommand{\wh}[1]{\widehat{#1}}

\newcommand{\alert}[1]{\color{red}{#1}\color{black}}
\newcommand{\note}[1]{\color{red}{#1}\color{black}}

\title{Insertion in constructed normal numbers}

\author{Ver{\'o}nica\  Becher\\
Universidad de Buenos Aires\\
vbecher@dc.uba.ar}

\date{November 4, 2021}
 
\maketitle

\vspace{-9cm}
\noindent
{\tt This version  does not include  Corollary 1 --V.B.}
\vspace{8cm}
\begin{abstract}
Defined by Borel, a real number  is normal to an integer base~$b$,
 greater than or equal to~$2$, if  in its base-$b$ expansion every block of digits 
occurs with the same limiting frequency as every other block of the same length.   
We consider the   problem of {\em insertion} in constructed base-$b$ normal
expansions  to obtain normality to base~$(b+1)$.
\end{abstract}
\bigskip

\section{Problem description and statement of results}

Defined by \'Emile Borel, a real number  is normal to an integer base $b$, 
greater than or equal to~$2$,
if in its base-$b$ expansion every block of digits 
occurs with the same limiting frequency as every other block of the same length.
Equivalently,
a real number $x$ is {normal to base $b$} if  the fractional parts of 
$x, bx, b^2x, \ldots$
are uniformly distributed modulo~$1$ in the unit interval.

There are many ways to modify normal numbers  preserving normality to a given base.
A major  result is Wall's  theorem~\cite{Wall}  showing that the  subsequences of 
a base-$b$ expansion
along   arithmetic progressions preserve normality,
 crowned by  Kamae and Weiss'~\cite{KamaeWeiss}  
complete characterization of the  subsequences that preserve normality. 
Other normality preserving operations are  addition by some  numbers~\cite{Volkmann,Aistleitner,Rauzy},
multiplication by a rational~\cite{Wall},  transformations by some finite automata~\cite{CO} and there are more.

Another form of modification transfers normality from  base~$b$ to normality to base~$(b-1)$:
Vandehey~\cite[Theorem 1.2]{vandehey} proved  that
 the subsequence   of  a  base-$b$ normal expansion 
formed by all the digits different from $(b-1)$ is  normal to base~$(b-1)$.
This is,  indeed, the  removal 
 from a normal  base-$b$ expansion of all the instances of the digit~$(b-1)$.

Here  we consider the dual, the problem of transferring normality from base~$b$ to base~$(b+1)$.

\begin{problem}
How  to  {\it insert} digits along a normal base-$b$ expansion 
 so that the resulting expansion is normal to base~$(b+1)$?
\end{problem}

There are two versions of the insertion  problem:
\begin{itemize}
\item[--] when insertion freely  uses all the digits in base $(b+1)$,
\item[--] when  insertion is limited  just to the new digit. 
\end{itemize}


In the present work we tackle the free  insertion  
problem on  a class  of  {\em constructed} normal  numbers.
We give an effective construction
that  controls the distance  between  each occurrence of the new digit    and the next.
An   effective construction is  a prescription on how to  perform 
the insertion while reading the input sequence from left to right.

Since we look at  normality to just one base at a time, 
instead of  fractional expansions of real numbers we deal  with sequences of symbols in a given alphabet 
and we talk about normality to that alphabet.
We state the results as transferring normality from an  alphabet  $A$  to  alphabet 
$\widehat A=A\cup\{\sigma\}$ with~$\sigma$~not in~$A$.

We consider sequences that  are the  concatenation of
 {\em perfect necklaces} over alphabet~$A$ of linearly increasing order 
and the resulting sequence is also 
a concatenation of  perfect necklaces of linearly increasing order but over alphabet~$\widehat A$.
Perfect necklaces were  introduced in~\cite{ABFY}. 
They are a variant of the classical  de Bruijn sequences.
The  concatenation  of perfect necklaces of linearly increasing order
 is a normal sequence (this is proved in Proposition~\ref{prop:perfect-normal}).
We prove the following.

\begin{theorem}\label{thm:free}
Let alphabets $A$ and   $\widehat A=A\cup\{\sigma\}$ with $\sigma$ not in~$A$.
Let $v\in A^\omega$ be  a concatenation of $(n,n)$-perfect necklaces   over alphabet~$A$ 
for $n=1,2, \ldots$
Then, there is an effective construction  of 
$\widehat v\in \widehat A^\omega$  normal to alphabet $\widehat A$ 
such that $v$~is a subsequence of $\widehat v$
and $\widehat v$  is the concatenation  of  $(n,n)$-perfect necklaces 
over alphabet~$\widehat A$  for $n=1,2, \ldots$
And for every integer~$N$ greater than~$|A|$, 
in between  the occurrences of the symbol  $\sigma$ in $\widehat v$ 
 just before and just after position~$N$ 
there are at most   $2|A|+\log_{|\widehat A|}(N)$ symbols.
\end{theorem}

The one symbol insertion problem  has already an adroit solution  on  arbitrary normal sequences,
given by Zylber in~\cite{Zylber}.

\begin{theorem}[\protect{Zylber~\cite[Theorem 1]{Zylber}}] \label{thm:zylber}
Let alphabets $A$ and   $\widehat A=A\cup\{\sigma\}$ with $\sigma$ not in~$A$.
\linebreak
Let  $v\in A^\omega$ be  normal to alphabet $A$.
Then, there exists  $\widehat v\in \widehat A^\omega$ normal to alphabet $\widehat A$
 such that $r({\widehat v}) = v$, where $r$ is the retract that  removes   all the instances of  the symbol $\sigma$.
\end{theorem}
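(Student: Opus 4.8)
The plan is to build $\widehat v$ by inserting copies of $\sigma$ into $v$ block by block, with slowly but steadily growing block lengths, and to analyse each block by a short counting argument. Write $b=|A|$ and $p=1/(b+1)$. First I would cut $v$ into consecutive chunks $v=u^{(1)}u^{(2)}u^{(3)}\cdots$ with $c_\ell:=|u^{(\ell)}|$ increasing, and replace each $u^{(\ell)}\in A^{c_\ell}$ by a word $\widehat u^{(\ell)}\in\widehat A^{\,c_\ell+s_\ell}$ obtained by inserting $s_\ell:=\lceil c_\ell/b\rceil$ copies of $\sigma$ into $u^{(\ell)}$, so that in $\widehat u^{(\ell)}$ the inserted symbols and the old symbols are in ratio $p:(1-p)$. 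Setting $\widehat v:=\widehat u^{(1)}\widehat u^{(2)}\widehat u^{(3)}\cdots$ makes $r(\widehat v)=v$ automatic, so the whole task reduces to choosing the chunk lengths and, for each $\ell$, the placement of the $s_\ell$ inserted symbols inside $u^{(\ell)}$.

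The reduction of normality to a statement about single blocks is: summing the block contributions and bounding the $O(k)$ error contributed by each block boundary, $\widehat v$ is normal to $\widehat A$ provided that (i) each long chunk $u^{(\ell)}$ of the normal sequence $v$ is turned into a $\widehat u^{(\ell)}$ all of whose sufficiently long factors have $\widehat A$-word counts up to length $K(\ell)$ within $\eps_\ell$ of uniform, with $K(\ell)\to\infty$ and $\eps_\ell\to0$, and (ii) the chunk lengths are chosen large enough — which, since $v$ is normal, one can always do — that on each chunk the word statistics of $v$ are within a prescribed $\delta_\ell$ of uniform and that every prefix of $\widehat v$ is dominated by blocks that have already been validated. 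The core is therefore a finite combinatorial lemma: \emph{for every $K$ and $\eps>0$ there are $\delta>0$ and $c_0$ such that every $u\in A^c$ with $c\ge c_0$ whose $k$-block counts $(k\le K)$ lie within $\delta c$ of uniform admits an insertion of $\lceil c/b\rceil$ copies of $\sigma$ whose result has all $\widehat A$-word counts up to length $K$ within $\eps$ of uniform, and likewise on every long enough factor.}

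To prove this lemma I would insert the $\lceil c/b\rceil$ symbols $\sigma$ at a uniformly random set of positions of the output word. Fix $w\in\widehat A^k$; let $P\in\{0,1\}^k$ record which positions of $w$ carry $\sigma$, let $z$ be the number of non-$\sigma$ positions, and let $u'\in A^z$ be the non-$\sigma$ part of $w$. An occurrence of $w$ is an occurrence of the binary pattern $P$ in the insertion pattern whose $0$-slots are fed the $z$ consecutive letters $u'$ of $u$; since the $z$ $0$-slots of a pattern occurrence feed consecutive positions of $u$, and since ``$P$ occurs starting here'' and ``this many letters of $u$ have been consumed so far'' depend on disjoint coordinates of the insertion pattern, a short computation gives that the expected number of occurrences of $w$ equals $p^{k-z}(1-p)^{z-1}$ times the number of occurrences of $u'$ in $u$; by the hypothesis on $u$ this is $cb^{-z}p^{k-z}(1-p)^{z-1}$ up to a $\delta$-error, and one checks this equals the target value $(c+\lceil c/b\rceil)(b+1)^{-k}$. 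A second-moment estimate — positions of the output that are far apart contribute almost independently — together with a union bound over the at most $(b+1)^{K+1}$ words (and over factors) then yields a single insertion that works for all of them simultaneously.

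The step I expect to be the main obstacle is the concentration in this lemma: flipping one symbol of the insertion pattern shifts the alignment of $u$ against all later output positions, so the naive bounded-differences argument fails and one has to split the output into roughly independent stretches and control the cross terms. The conceptual point — and the reason, as remarked after the statement, that the construction is in general not effective — is that the good insertion in each block genuinely depends on the content of that chunk of $v$: it must, because normality is not preserved along arbitrary subsequences, so no insertion rule chosen without looking at $v$ can be guaranteed to work. Carrying the construction out algorithmically therefore requires knowing, for each $\ell$, how long a piece of $v$ one must read before its statistics are within $\delta_\ell$ of uniform — i.e. an effective upper bound on the discrepancy between the expected and the actual occurrence counts in $v$ — and with such a bound the whole construction becomes effective.
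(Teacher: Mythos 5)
Your first-moment computation is correct: for $w\in\widehat A^{\,k}$ whose non-$\sigma$ letters form $u'\in A^z$, the expected count under a uniformly random placement of the $\lceil c/b\rceil$ inserted symbols is indeed $p^{k-z}(1-p)^{z-1}\occ{u}{u'}\approx(c+s)(b+1)^{-k}$, and the reduction of normality of $\widehat v$ to a per-block lemma plus a Piatetski--Shapiro-type criterion is sound. But there is a genuine gap exactly where you flag one: the concentration step of your combinatorial lemma is asserted, not proved, and it is the heart of the whole argument. The difficulty is worse than a failure of bounded differences: the event ``$w$ occurs at output position $j$'' depends on the insertion pattern globally, through the number $m$ of non-$\sigma$ slots before $j$, which fluctuates by order $\sqrt{j}$; and the occurrences of $u'$ in windows of $u$ of length $\sqrt{j}$ are not controlled by your hypothesis, which only constrains counts over the whole chunk or over factors of length comparable to $c$. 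A viable second-moment bound must be organized by pairs of occurrences of $u'$ in $u$ (for which the two local insertion windows are nearly independent under the uniform $s$-subset measure) rather than by pairs of output positions, and must then survive a union bound over all words up to length $K$ \emph{and} over all (suitably coarsened) factors needed to handle prefixes ending mid-block. None of this is carried out, so the proposal is a plan rather than a proof.

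It is also worth saying that your route differs genuinely from Zylber's (which the paper reproduces and adapts for Corollary~\ref{cor:justone}), and that your closing ``conceptual point'' is not right. Zylber inserts \emph{deterministically}: inside each block the positions of $\sigma$ are exactly the positions of $\sigma$ in the $n$-ordered necklace $\omega_n$ over $\widehat A$ (the expansion $e_n$), entirely independent of the content of that block of $v$. The probabilistic machinery is replaced by the exact counting identity $\sum_{u\in A^{\ell_n}}\alocc{e_n(u)}{w}=|A|^{\ell_n}$ together with the discrepancy-transfer lemma: $\Delta_{A,\ell_n}(v)<\eps$ implies $\Delta_{\widehat A,n}(e_n(v))<\eps\,|A|^{\ell_n}/|\widehat A|^{n}$. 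This yields the block lemma with no existence argument at all, at the price that the transfer constant is enormous, so each block of $v$ must be taken long enough that its aligned discrepancy at length $\ell_n$ is below roughly $|\widehat A|^{n}/|A|^{\ell_n}$. That choice of block lengths is the \emph{only} $v$-dependence and the only source of non-effectiveness, which is why an effective discrepancy bound for $v$ (as for nested perfect necklaces) makes the construction effective. So a content-blind within-block insertion does work; your appeal to ``normality is not preserved along arbitrary subsequences'' concerns selection, not insertion, and does not support the claim that the insertion must look at the chunk's content.
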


The construction  in the proof of Theorem \ref{thm:zylber} is not effective in general.
It becomes  effective   when the input  sequence $v\in A^\omega$ satisfies the following condition.
First consider  the simple discrepancy of a sequence $(x_n)_{n\geq 1}$
of real numbers in the unit interval 
 with respect to an interval $I$,
\[
d_{N,I}( (x_n)_{n\geq 1})=\Big|\frac{1}{N}\#\{j:1\leq j\leq N, x_n\in I\}- |I|\Big|
\]
Let $b$ be the cardinality of the alphabet $A$ 
and let  $x$ be the real number denoted by the input sequence $v\in A^\omega$.
The method in the proof of Theorem \ref{thm:zylber} becomes effective 
when there is a computable upper bound of the 
 simple discrepancies $d_{N,I}\big( ((b^\ell)^n x \mod 1)_{n\geq 1} \big)$ 
 for infinitely many integer values $\ell$, 
and for every interval $I$ of the form $(a/b^\ell, (a+1)/b^{\ell})$, 
with $0\leq a< b^\ell-1$.
In particular, it follows from Proposition \ref{prop:aligned} 
that the concatenation of $n$-ordered necklaces for $n=1,2, \ldots$ 
 satisfies the  condition (the $n$-ordered necklace  is the $(n,n)$-perfect necklace  given by 
  the concatenation  of all words of length~$n$ in lexicographic order).

It  remains to study  how to compare the discrepancy of ~$(b^n x \mod 1)_{n\geq 0}$ 
and  the discrepancy of $((b+1)^n y \mod 1)_{n\geq 0}$
when the base-$(b+1)$ expansion of $y$ results from insertion
  in  the base-$b$ expansion of a normal number~$x$. 
It may be possible to obtain metric results  similar to those obtained 
by Fukuyama and Hiroshima~\cite{Fukuyama} for some subsequences  of~$(b^n x\mod 1)_{n\geq 0}$. 
\medskip

This document is organized as follows:
Section~\ref{sec:perfect} presents  the basics of perfect necklaces
and Section~\ref{sec:free}  solves   the free insertion problem 
on the concatenation of  perfect necklaces.

\section{Perfect necklaces and nested perfect necklaces}\label{sec:perfect}

\subsection{Perfect necklaces}

This section is based on~\cite{ABFY}.
A  word  is a finite sequence of symbols in a given alphabet.
For a finite  alphabet~$A$, we write~$|A|$ for its cardinality,
$A^n$ for the set of all words of length $n$,  $A^*$ for the set of all  words
and $A^\omega$ for the set of all infinite sequences.
The positions in words and in sequences are numbered starting at~$1$.
We write $v[i]$  for the symbol at position~$i$ and 
we write $v[i,j]$ for   the symbols  of $v$ from position~$i$ to position~$j$. 
The length of a word $v$ is $|v|$.

Let $\theta: A^* \to A^*$ be the \emph{rotation} operator, 
 $(\theta v)[i]=v[(i+1) \mod |v|])$, position~$i$  between~$1$~and the length of~$v$.
We let~$\theta^n$ denote  the application of the rotation $n$~times. 
A circular word or necklace is the equivalence class of a word under rotations.
To denote a necklace we write $[w]$ where $w$~is any of the words in the equivalence class. 
For instance, $[000]$  contains  a single word $000$ because for every $i$, $\theta^i(000)=000$
and $[110]$ contains three words $\theta^0(110)=110$, $\theta^1(110)=101$ and $\theta^2(110)=011$.

\begin{definition}
A necklace  is  $(n,k)$-perfect   if 
 each word of length $n$ occurs $k$ many times at
 positions different modulo $k$, for any convention of the starting point.
\end{definition}

Thus, each $(n,k)$-perfect necklace has length $k|A|^n$.
Perfect necklaces are a variant of  de Bruijn sequences. 
Recall that a de Bruijn sequence of order $n$ over alphabet $A$ is 
a necklace of  length~$|A|^n$  and each word of length $n$
 occurs in it exactly once. Then,
$(n, 1)$-perfect necklaces coincide with the de Bruijn sequences of order~$n$. 
\medskip

\noindent
Consider alphabet  $A=\{0,1\}$.  The following are  $(2,2)$-perfect necklaces, 
\[
[00\ 01\ 10\ 11]  \text{ and }  [00\ 10\ 01\ 11].
\]
This is a $(3,3)$-perfect necklace  
\[
[000\;110\;101\;111\;001\;010\;011\;100].
\]
The following are not $(n,n)$-perfect
\[
[00\;01\;11\;10] \text{ and }
[000\;101 \;110\;111\;010\;001\;011\;100].
\]

\begin{definition}\label{def:ordered}
For an alphabet $A$  and  a positive integer $n$, the $n$-\emph{ordered necklace}  
  is  the concatenation of all words  of length~$n$ in lexicographic order.
\end{definition}
\noindent
These are the $n$-ordered necklaces over alphabet  $A=\{0,1\}$ for $n=1, 2,3$,
\[ 
[0 1],  \ \ \  [00\ 01\ 10\ 11], \ \ \   [000\ 001\ 010\ 011\  100\ 101\  110\  111]
\]
Every $n$-ordered necklace is $(n,n)$-perfect.
Inexplicably, this  was not observed  by   Barbier~\cite{Barbier2,Barbier1} 
nor by Champernowne~\cite{Champernowne}.

\begin{remark}[\protect{\cite[Theorem 5]{ABFY}}] 
Identify words of length $n$ over  alphabet $A$   with the integers $0$ to~$|A|^n-1$.
Let $r$  coprime with $|A|$.
The concatenation of words  corresponding to the arithmetic sequence 
$0,r,2r,..., (|A|^n -1)r$  yields a $(n,n)$-perfect necklace.
By taking $r=1$ we obtain that $n$-ordered necklaces are $(n,n)$-perfect.
\end{remark}

\begin{proposition} \label{prop:sigma}
In the $n$-ordered necklace over alphabet $A$,
for each symbol $a\in A$, 
between one occurrence of $\sigma$ and the next there are at most   $n |A|-1$ symbols.
\end{proposition}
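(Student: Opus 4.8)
The plan is to reduce the statement to an elementary fact about the last symbols of consecutive length‑$n$ words. First I would fix the identification of $A$ with $\{0,1,\dots,|A|-1\}$ under which the $i$‑th word $w_i$ in lexicographic order is the base‑$|A|$ expansion of $i$ padded to $n$ digits, so that the $n$‑ordered necklace is the cyclic word $w_0 w_1\cdots w_{|A|^n-1}$. The one property I need from this encoding is that, reading the words cyclically, the word at cyclic index $m$ has last symbol $m \bmod |A|$; this is immediate from $w_m$ being the base‑$|A|$ expansion of $m$, and it survives the wrap‑around from $w_{|A|^n-1}$ back to $w_0$ precisely because $|A|$ divides $|A|^n$.

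Next, fixing a symbol $a\in A$, I would take an arbitrary block $W$ of $n|A|$ consecutive positions of the necklace (read cyclically) and identify the complete words it meets. Writing $W$ as starting at the $t$‑th symbol of some word $w_k$, with $1\le t\le n$, a one‑line length count, $(n-t+1)+(|A|-1)n+(t-1)=n|A|$, shows that $W$ is the concatenation of the suffix of $w_k$ from position $t$ on, the $|A|-1$ complete words $w_{k+1},\dots,w_{k+|A|-1}$, and the length‑$(t-1)$ prefix of $w_{k+|A|}$. In particular $W$ contains the last symbol of each of the $|A|$ consecutive words $w_k,w_{k+1},\dots,w_{k+|A|-1}$, and by the encoding property these last symbols form a complete residue system modulo $|A|$, hence run over all of $A$; so one of them equals $a$, i.e. $a$ occurs in $W$.

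Finally I would convert this into the stated gap bound. If $a$ occurs at a position $p$ and the next occurrence (cyclically) is at position $q$, then positions $p+1,\dots,q-1$ all carry symbols different from $a$; were there $n|A|$ or more of them, they would contain a block of $n|A|$ consecutive positions free of $a$, contradicting the previous step. Hence $q-p-1\le n|A|-1$, which is exactly the claim; note that $a$ does occur in the necklace (e.g. at the last symbol of $w_a$), so the "next occurrence" is always defined, and the degenerate case $n=1$ works out to equality.

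I do not anticipate a genuine obstacle here. The only place that needs mild care is the block‑to‑words bookkeeping when $t$ equals $1$ or $n$, where one of the two partial words degenerates (to the whole word $w_k$, or to the empty prefix of $w_{k+|A|}$, respectively); in every case $W$ still contains the final symbols of $w_k,\dots,w_{k+|A|-1}$, so the argument goes through unchanged.
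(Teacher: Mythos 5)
Your proof is correct and rests on the same key observation as the paper's own proof, namely that the last symbols of $|A|$ consecutive words in lexicographic order run through all of $A$; you merely package it as ``every window of $n|A|$ consecutive positions contains every symbol,'' which is if anything more carefully argued than the paper's sketch (the paper only asserts that ``all the other cases yield a smaller number of symbols''). You also correctly read the ``$\sigma$'' in the statement as a typo for ``$a$''.
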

\begin{proof}
The $n$-ordered necklace is the concatenation of all words of length $n$ in lexicographical order.
Consider   $|A|+1$ many consecutive of these words, $u_1, \ldots u_{|A|+1}$.
Observe that  the last 
symbol in~$u_1$ is necessarily the same as the last symbol in~$u_{|A|+1}$.
Let~$a$ be that symbol. In between these  two occurrences of $a$   there are  $n|A|-1$ symbols.
For some choices of  $u_1, \ldots u_{|A|+1}$
these are the only two occurrences of $a$ in these words.
All the other cases yield  a smaller number of symbols between two occurrences.
\end{proof}

A particular class of perfect necklaces,  called  {\em nested perfect necklaces},
were introduced  in~\cite{BC2019}, generalizing a construction given by M. Levin in 
\cite[Theorem 2]{Levin1999}.
A $(n,k)$-perfect necklace over  alphabet  $A$ is  {\em nested}
if $n=1$ or 
it is the concatenation of $|A|$\ nested   $(n-1,k)$-perfect necklaces.

\noindent
For example,  the following is a nested $(2,2)$-perfect necklace over alphabet $A=\{0,1\}$,
\[
[\underbrace{\ \ 0011}_{\text{\tiny (1,2)-perfect}} \underbrace{\ 0110\ \ }_{\text{\tiny (1,2)-perfect}}]
\]
\noindent
Each of these  $8$   are  $(1,4)$-perfect necklaces.
\begin{gather*}
  [00001111]\ \ \  [01011010] \\
  [00111100]\ \ \ [01101001] \\
  [00011110]\ \ \ [01001011] \\
  [00101101]\ \ \ [01111000]
\end{gather*}

The concatenation in each row yields a $(2,4)$-perfect necklace.

The concatenation of the first two rows   yields a nested $(3,4)$-perfect necklace.  

The concatenation of the last two rows   yields a  nested $(3,4)$-perfect necklace.  

The concatenation of all  rows  yields a nested $(4,4)$-perfect necklace.  
\medskip
\\
The $n$-ordered necklaces are perfect  but {not nested},
for example for  $A=\{0,1,\sigma\}$ and $n=2$,
\[
[\underbrace{00\ 01 \ 0\sigma}_{\text{\tiny not (1,2)-perfect}}\  \underbrace{10\ 11\ 1\sigma}_{\text{\tiny not (1,2)-perfect}}\ \underbrace{\sigma0\ \sigma1\ \sigma\sigma}_{\text{\tiny not (1,2)-perfect}}]
\]

\subsection{Perfect necklaces as Eulerian cycles in astute graphs}

The $(n,k)$-perfect necklaces  are characterized  with  Eulerian cycles in the so called 
{\em astute graphs}.

\begin{definition}\label{def:astute}
The  astute graph $G_A(n,k)$ is a pair $(V,E)$ where 

$V=\{(w,m):  w \in A^n, \ m\in \{0, \ldots k-1\}\}$ and 

$E=\{ \big((w,m), (w',m') \big):  w[2..n]=w'[1.. n-1], \ m'=(m +1)\mod k\}$.
\end{definition}

Thus, $G_A(n,k)$ has $k |A|^n$ vertices and $k|A|^{n+1}$ edges.
It  is Eulerian because it is  strongly regular 
(all vertices have in-degree  and out-degree equal to $|A|$)
  and strongly connected (every vertex is reachable from every other vertex).
Notice that $G_A(n,1)$ is the de Bruijn graph of words of length $n$ over alphabet~$A$.

\begin{proposition}[\protect{\cite[Corollary 14]{ABFY}}] \label{prop:Eulerian}
Each $(n,k)$-perfect necklace over alphabet $A$  can  be constructed as an Eulerian cycle in~$G_A(n-1,k)$.
\end{proposition}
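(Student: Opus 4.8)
The plan is to exhibit the standard bijection between Eulerian cycles and perfect necklaces, which for $k=1$ is the familiar fact that the de Bruijn sequences of order $n$ are precisely the Eulerian cycles of the de Bruijn graph $G_A(n-1,1)$. First I would fix a convenient labelling of the edges of $G_A(n-1,k)$: the edge from $(a_1\cdots a_{n-1},m)$ to $(a_2\cdots a_n,(m+1)\bmod k)$ is identified with the pair $(a_1\cdots a_n,\,m)$, that is, a length-$n$ word over $A$ together with a \emph{phase} $m\in\{0,\dots,k-1\}$. For each fixed length-$n$ word $u$ there are exactly $k$ edges carrying the label $u$, one per phase, and since $G_A(n-1,k)$ has $k|A|^n$ edges these exhaust all of them.

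Next, from an Eulerian cycle to a necklace. Traversing the cycle and reading the first symbol of the source word of each successive edge produces a circular word $N$ of length $k|A|^n$, one symbol per edge; the length-$n$ window of $N$ starting at step $i$ is exactly the label word of the $i$-th edge. Because the phase coordinate increases by $1$ modulo $k$ at every step and $k$ divides the cycle length $k|A|^n$, once a starting edge is chosen the $i$-th edge has phase $i\bmod k$. Hence a label word $u$ occurs in $N$ precisely at the steps where a $u$-labelled edge is used; since the Eulerian cycle uses each of the $k$ edges labelled $u$ exactly once, and these have the $k$ distinct phases, the word $u$ occurs exactly $k$ times in $N$ at positions with pairwise distinct residues modulo $k$. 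Changing the starting edge only shifts every residue by a constant, so the count is unaffected by the convention for the starting point; thus $N$ is $(n,k)$-perfect.

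For the converse, given an $(n,k)$-perfect necklace $N$ of length $k|A|^n$, fix a starting point and attach to each position $i$ the edge of $G_A(n-1,k)$ from $(N[i..i+n-2],\,i\bmod k)$ to $(N[i+1..i+n-1],\,(i+1)\bmod k)$, all indices read cyclically. Consecutive positions share a vertex, so this is a single closed walk of length $k|A|^n$, and the edge with label $(u,m)$ is used exactly at the positions $i$ with $N[i..i+n-1]=u$ and $i\equiv m\pmod k$; by the perfect-necklace property $u$ occurs exactly $k$ times, once in each residue class modulo $k$, so each edge is used exactly once. A closed walk that uses every edge of the strongly connected graph $G_A(n-1,k)$ exactly once is an Eulerian cycle, and this map is visibly inverse to the one of the previous paragraph, so the correspondence is a bijection; in particular every $(n,k)$-perfect necklace arises this way.

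The only delicate point is the bookkeeping of the phase coordinate: one has to check that ``position $i$ along the Eulerian cycle'' and ``phase of the $i$-th edge'' coincide modulo $k$ --- using both that the phase advances by exactly $1$ at each step and that the cycle length $k|A|^n$ is divisible by $k$, so the assignment closes up consistently around the cycle --- and that shifting the starting point merely rotates all phases by a constant, which is exactly what lets the argument match the clause ``for any convention of the starting point'' in the definition of a perfect necklace. Everything else is the classical de Bruijn-graph argument.
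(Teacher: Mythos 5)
Your proof is correct: the edge-labelling of $G_A(n-1,k)$ by pairs (length-$n$ word, phase), the read-off of first symbols along an Eulerian cycle, and the careful phase bookkeeping (phases advance by $1$ per step, $k$ divides the cycle length, and a change of starting edge only shifts all residues by a constant) together give exactly the required correspondence in both directions. Note that the paper does not prove this proposition at all --- it is quoted as Corollary 14 of the cited reference [ABFY] --- so you have supplied a complete self-contained argument where the paper relies on a citation; what you wrote is the standard de Bruijn-graph bijection extended by the congruence coordinate, which is the intended proof.
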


In some cases several  Eulerian cycles in $G_A(n-1,k)$
yield the same  $(n,k)$-perfect necklace,
 this happens when there is a period inside a cycle.

\begin{remark}[\protect{\cite[Theorem 20]{ABFY}}]
The~number  of $(n,k)$-perfect necklaces over a $b$-symbol alphabet~is 
 \[
\frac{1}{k} \sum_{d_{b,k}|j|k} e(j) \phi(k/j),
\] \vspace*{-0.25cm}
where
\begin{itemize}
\item[-]  
 $d_{b,k} = \prod p_i^{\alpha_i}$, 
 such that 
 $\{ p_i \}$  is the set of primes that divide both $b$~and~$k$, 
and $\alpha_i$ is  the exponent of $p_i$ in the factorization of~$k$,

\item[-]
 $e(j)= (b!)^{j b^{n-1}}b^{- n}$ is the number  of Eulerian cycles  in  $G_A(n-1,j)$  where $|A|=b$, 

\item[-] 
$\phi$ is Euler's totient function,
  $\phi(m)$ counts the positive integers 
less than or equal to $m$ that are relatively prime to~$m$.
\end{itemize}
\end{remark}

\begin{remark}[\protect{\cite[Theorem 2]{BC2019}}]
For each $d=0,1,2,\ldots$  there are $ 2^{2^{d+1}-1}$ binary
nested $(2^d,2^d)$-perfect necklaces.
\end{remark}

\subsection{Counting aligned and non-aligned occurrences of words}

For  the number of occurrences of a word $u$ in a word $v$ at any position we write $\occ{v}{u}$,
\[
\occ{v}{u} = |\{i : v[i,i+|u|-1] = u\}|.
\]
For example $\occ{00010}{00}=3$.
We are intersted in counting occurences of a word $u$ in a word $v$ when $[v]$ is a perfect necklace.
\begin{proposition}
If  $[v]$ is a $(n,k)$-perfect necklace over alphabet $A$ 
then for every word $u$  of length at most~$n$,
\[
    k |A|^{|v|-|u|}  - |u| + 1 \leq \occ{v}{u} \leq k |A|^{|v|-|u|}.
\]
\end{proposition}

Recall that the positions in a  a words are numbered starting at~$1$.
 Given two words $v$ and $u$, we write $\alocc{v}{u}$ for  the number of occurrences of  
$u$ at the positions of~$v$  congruent to~$1$ modulo  the length of~$u$, 
that we call  {\em aligned occurrences},
\[
\alocc{v}{u} 
= \big| \{i : v[i,i+|u|-1] = u \text{ and } i \equiv 1 \mod{|u|}\} \big|.
\]
For example, $\alocc{00000}{00}=2$ and $\alocc{1001}{00}=0$.
The relation between  $\occ{v}{u}$  and  $\alocc{v}{u}$ is as follows,
\[
\occ{v}{u} = \sum_{i=0}^{|u|-1} \alocc{v[1+i,|v|]}{u}.
\]
So, for any single symbol~$a$ in the alphabet $A$, 
$
\occ{v}{a}=\alocc{v}{a}.
$

\begin{proposition}\label{prop:aligned}\
\begin{enumerate}
\item
If $[v]$ is  $(n,n)$-perfect  over alphabet $A$, then for every $u$ 
of length $\ell$ where $\ell$ divides $n$,
\[
|A|^{n-\ell} n/\ell -1 \leq  \alocc{v}{u} \leq |A|^{n-\ell} n/\ell.
\]

\item  If $[v]$ is the ordered $(n,n)$-perfect necklace then 
for every $u$  of length $\ell$, where $\ell$ divides $n$,
and for any position $t$ in $v$,
\[
|A|^{-\ell} t/\ell -O(t/n) \leq  \alocc{v[1,t]}{u} \leq |A|^{-\ell} t/\ell + O(t/n).
\]
\end{enumerate}
\end{proposition}

\begin{proof}
1. To count the number of occurrences of   $u$  of length $\ell$ 
in $[v]$, with $1\leq \ell\leq n$,
 we  count how many times $u$ 
occurs at the beginning of  a word of length~$n$.
There are $|A|^{n-\ell}$ many different words of length $n$ that start with~$u$,
and each occurs occurs $n$ times in $[v]$  at positions that are different modulo~$n$.
However, to count occurrences in $v$  we can not regard $v$ circularly,
so  one occurrence of $u$ may be missing . Thus, 
\[
|A|^{n-\ell} n/\ell -1 \leq  \alocc{v}{u} \leq |A|^{n-\ell} n/\ell.
\]

2. If   $[v]$ is $(n,n)$-perfect ordered necklace, so it is the concatenation of
blocks $B_0 \ldots B_{|A|^n-1}$, where each block $B$ has the form 
\[
p_{n-1}p_{n-2}...p_{-1}p_0.
\]
Suppose the $t$-th digit of $v$  occurs within
$p_{n-1}p_{n-2}...p_1p_0$
then
\[
t=n\sum_{j=0}^{n-1} p_j |A|^{j} +\theta n,
\]
for $0< \theta \leq 1$.
Assume $u$ has length $\ell$ less than or equal to $n$.
Let $g_{n,k}(v,t,u)$ denote the number of times that $u$ occurs undivided in the 
first $t$ digits of $v$ with the first digit of $u$ as the $k$-th
 digit of a block $B$ in~$v$.
With $u$  fixed in a position in the block, 
we may choose the last $ n-\ell-k+1$ digits of the block in 
$
|A|^{n-\ell-k+1}
$ many ways.
Having chosen these, in order to ensure that the block
lies as required in  $v[1,t]$,
\[
v[1,t]=\underbrace{0 ... 0}_{\text{block } B_0}\ \underbrace{0 ... 0 1}_{\text{block }  B_1} \ldots
\underbrace{
\overbrace{p_{n-1}p_{n-2}\ldots p_{n-{(k-1)}}}^{\text{first }k-1 \text{ digits of the block}}\
 \overbrace{u_1 u_2 ... u_\ell}^{\text{digits of }u} \
\overbrace{\ldots}^{n-k-\ell+1\text{ digits}}}_{\text{ block } B}
\]
we shall be able to choose the first $k-1$ digits of the block $B$
in this number of ways:
\[
\sum_{j=n-k+1}^{n-1} p_j |A|^{j+k-n-1}\ \  \text{ or  } \ \
\sum_{j=n-k+1}^{n-1} p_j |A|^{j+k-n-1}  +1.
\]
Then, if $k\leq  n-\ell+1$,
\begin{align*}
g_{n,k}(v,t,u)&= |A|^{n-\ell-k+1} 
 \sum_{j=n-k+1}^{n-1} p_j |A|^{j+k-n-1} +\theta', 
\\
&= |A|^{-\ell} 
\sum_{j=n-k+1}^{n-1} p_j |A|^{j}  +\theta' |A|^{n-k+1}, \text{ where $0\leq\theta'\leq 1$}.
\end{align*}
To obtain a lower bound of $\alocc{v[1,t]}{u}$ we sum 
 $g_{n,k}(v,t,u) $ for every   $k$  such that $(k\mod  \ell)=K$,
where   $K$ is $( n\mod \ell)$,
\begin{align*}
\alocc{v[1,t]}{u}&\geq
\sum_{\substack{k: (k\mod \ell )=K\\1\leq k\leq n-\ell+1}}  g_{n,k}(v,t,u) 
\\&=
\sum_{\substack{k: (k\mod \ell )=K\\1\leq k\leq n-\ell+1}}  |A|^{-\ell} 
\sum_{j=n-k+1}^{n-1} p_j |A|^{j} 
\\
&=|A|^{-\ell}  \sum_{j=\ell+K}^{n-1} \lfloor j/\ell\rfloor  p_j |A|^{j} 
\\
&\geq  |A|^{-\ell}  /\ell \Big( \sum_{j=\ell+K}^{n-1} n  p_j |A|^{j}  -
\sum_{j=\ell+\eta}^{n-1} (n-j)  p_j |A|^{j} \Big)
\\
&=  |A|^{-\ell}  t/\ell  -O(t/n).
\end{align*}
Finally notice that  the aligned occurrences of $u$ can occur at most once in between every 
two blocks of~$v$, 
\begin{align*}
\alocc{v[1,t]}{u}&\leq
\Big(\sum_{\substack{k: (k\mod \ell )=K\\1\leq k\leq n-\ell+1}}  g_{n,k}(v,t)  \Big)+O(t/n)
\\&=
\Big(\sum_{\substack{k: (k\mod \ell )=K\\1\leq k\leq n-\ell+1}}  |A|^{-\ell} 
\sum_{j=n-k+1}^{n-1} p_j |A|^{j}  \Big) +O(t/n)
\\
&=|A|^{-\ell}  \Big(\sum_{j=\ell+\eta}^{n-1} \lceil j/\ell\rceil  p_j|A|^{j} \Big)+O(t/n)
\\
&\leq  |A|^{-\ell}  /\ell \Big( \sum_{j=\ell+K}^{n-1} n  p_j |A|^{j}  -
\sum_{j=\ell+\eta}^{n-1} (n-j)  p_j |A|^{j} \Big)  +O(t/n)
\\
&=  |A|^{-\ell}  t/\ell  +O(t/n).
\end{align*}
\end{proof}

\subsection{From perfect necklaces to normal sequences }

We show that the concatenation of perfect necklaces of linearly increasing order is normal.
To prove it we use  Piatetski-Shapiro's  theorem~\cite{PS,MS,Bugeaud2012}. 

\begin{proposition}[Piatetski-Shapiro theorem]  \label{prop:piatetski-shapiro}	\label{hotspot}
The sequence  $v\in A^\omega$ is normal to alphabet $A$ if and only if there is positive constant~$C$ such that for all words~$u$,
\[
	\limsup_{n\rightarrow\infty}   \frac{\occ{v[1,n]}{u}}{n}  \leq C |A|^{-|u|}.
\]
\end{proposition}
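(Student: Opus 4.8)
The forward implication is immediate: if $v$ is normal then $\lim_{n\to\infty}\occ{v[1,n]}{u}/n=|A|^{-|u|}$ for every word~$u$, so the $\limsup$ equals $|A|^{-|u|}<2|A|^{-|u|}$ and $C=2$ works. All the work is in the converse, which I would prove by a block-decomposition argument.

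Fix a word~$u$ of length~$\ell$ and an $\eps>0$; the goal is $\occ{v[1,n]}{u}/n\to|A|^{-\ell}$. First I would pick a large window length~$t$ (to be chosen last, in terms of $\ell$ and~$\eps$) and cut $v[1,n]$ into $\lfloor n/t\rfloor$ consecutive blocks $B_1,B_2,\dots$ of length~$t$, plus a short tail. Every occurrence of~$u$ either lies inside a single block or straddles a block boundary; the straddling occurrences number at most $(\ell-1)\lfloor n/t\rfloor$, so they contribute less than $\eps n/10$ once $t$ is large, and it remains to control $\sum_j\occ{B_j}{u}$.

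Next I would call a word $y\in A^t$ \emph{typical} if $\big|\,\occ{y}{u'}-t\,|A|^{-|u'|}\big|\le\eps t/4$ for every word~$u'$ of length at most~$\ell$, and \emph{atypical} otherwise, and establish two things. First, atypical words are \emph{exponentially} rare under the uniform distribution on $A^t$: writing $\occ{y}{u'}$ as a sum of $0/1$ indicators and splitting them by residue class modulo $|u'|$ into genuinely independent subsums, a Chernoff bound and a union bound over the finitely many~$u'$ give $|\{y\in A^t:y\text{ atypical}\}|\le|A|^t e^{-c(\eps,\ell)t}$. Second --- and this is the only place the hypothesis is used --- each \emph{individual} atypical word~$y$, being one fixed word of length~$t$, satisfies $\occ{v[1,n]}{y}<C|A|^{-t}n$ for all large~$n$, uniformly in~$y$ because $A^t$ is finite. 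Since the block-aligned occurrences of~$y$ in $v[1,n]$ are at most all of its occurrences, the number of atypical blocks is at most $\sum_{y\text{ atypical}}\occ{v[1,n]}{y}<C|A|^{-t}n\cdot|A|^t e^{-ct}=Cn\,e^{-ct}$, so the atypical blocks contribute at most $t\cdot Cn\,e^{-ct}<\eps n/10$ to $\sum_j\occ{B_j}{u}$ once $t$ is large. The typical blocks number between $(1-Ce^{-ct})\lfloor n/t\rfloor$ and $\lfloor n/t\rfloor$, and each contributes $t|A|^{-\ell}\pm\eps t/4$, for a total of $n|A|^{-\ell}\pm\eps n/2$ when $t$ is large. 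Adding the three estimates gives $\big|\,\occ{v[1,n]}{u}-n|A|^{-\ell}\big|<\eps n$ for all large~$n$; letting $\eps\to0$ and then letting~$u$ range over all words yields normality of~$v$.

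The delicate point is the atypical blocks: the trivial bound $\occ{B_j}{u}\le t$ loses a factor~$t$, so it does not suffice that atypical words have vanishing uniform measure --- they must be exponentially rare, which is why a Chernoff-type estimate rather than Chebyshev's inequality is needed. An alternative that avoids concentration inequalities altogether: pass to a subsequence $n_k\to\infty$ along which every word frequency $\occ{v[1,n_k]}{w}/n_k$ converges, note that the limiting profile is consistent under one-step extensions on both sides and so defines a shift-invariant probability measure~$\nu$; the hypothesis then says exactly that $\nu\le C\mu$, where $\mu$ is the uniform Bernoulli measure, and since $\mu$ is ergodic the Radon--Nikodym derivative $d\nu/d\mu$ is shift-invariant, hence $\mu$-almost-everywhere constant, hence $\equiv1$, so $\nu=\mu$; as every subsequential limit profile is then the uniform one, $v$ is normal.
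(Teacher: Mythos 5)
Your proof is correct, but it is worth saying up front that the paper does not prove this proposition at all: it is imported verbatim from the literature as Piatetski-Shapiro's ``hot spot'' theorem, with citations in place of an argument. What you have written is therefore a self-contained reconstruction of that theorem's standard proofs rather than a variant of anything in the paper. Your first argument is the classical combinatorial one, and you correctly isolate its two load-bearing points: (i) the atypical words of length $t$ must be \emph{exponentially} rare, not merely of vanishing density, because the trivial per-block bound loses a factor of $t$ --- your splitting of $\occ{y}{u'}$ into residue classes modulo $|u'|$ does make the indicators in each subsum genuinely independent under the uniform measure (they read disjoint coordinate blocks), so Chernoff applies; and (ii) the hypothesis is used exactly once, to bound the number of blocks equal to any fixed atypical $y$ by $\occ{v[1,n]}{y} < C|A|^{-t}n$, with the threshold $n$ made uniform over $y$ by the finiteness of $A^t$. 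Both points are handled properly, and the bookkeeping (straddling occurrences, typical-block count) closes. Your ergodic-theoretic alternative is also sound and is the slicker modern proof: the only phrasing I would tighten is the Radon--Nikodym step, since the one-sided shift is not invertible; it is cleaner to argue that $\nu\ll\mu$ forces $\nu$ to be carried by the set of $\mu$-generic points and then apply Birkhoff's theorem for $\nu$, which gives $\nu=\mu$ directly. Either route would serve as a legitimate proof were the paper to include one; the block-decomposition version is elementary and effective, while the measure-theoretic version generalizes immediately to other ergodic reference measures.
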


\begin{proposition} \label{prop:perfect-normal}
The concatenation of $(n,k)$-perfect necklaces over alphabet $A$, 
for $n=1,2, \dots$ and $k_n$ a linear function of $n$,
 is normal to alphabet $A$.
\end{proposition}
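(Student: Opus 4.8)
The plan is to invoke the Piatetski--Shapiro criterion (Proposition~\ref{prop:piatetski-shapiro}): it suffices to exhibit a constant $C$ such that for every word $u$ over $A$, the frequency $\occ{v[1,n]}{u}/n$ is eventually below $C|A|^{-|u|}$. Write $v = w_1 w_2 w_3 \cdots$, where $[w_j]$ is a $(j,k_j)$-perfect necklace of length $k_j|A|^j$, and $k_j = \alpha j + \beta$ is the given linear function. Fix a word $u$ of length $m=|u|$; only the blocks $w_j$ with $j \geq m$ can be ``resolved'' by the perfect-necklace occurrence bound, and the earlier blocks $w_1,\dots,w_{m-1}$ contribute a bounded total length that is negligible in the limit. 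So I would look at a prefix $v[1,n]$ and let $J$ be the index of the block containing position $n$; then $n \geq \sum_{j=m}^{J-1} k_j |A|^j$, which is of order $k_J|A|^J \approx \alpha J |A|^J$ up to a constant factor, since the geometric-type sum is dominated by its last term.

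Next I would bound the numerator $\occ{v[1,n]}{u}$. Occurrences of $u$ in $v[1,n]$ are of two kinds: those lying entirely within a single block $w_j$, and those straddling a boundary between consecutive blocks. For a single block with $j \geq m$, the displayed inequality just before the proposition gives $\occ{w_j}{u} \leq k_j|A|^{j-m}$ (the perfect-necklace upper bound with $|w_j| = k_j|A|^j$), so summing over $j = m$ to $J$ gives at most $\sum_{j=m}^{J} k_j |A|^{j-m} = |A|^{-m}\sum_{j=m}^{J} k_j|A|^j$. The straddling occurrences: each block boundary can host at most $m-1$ occurrences of $u$ that cross it, and there are at most $J$ boundaries up to position $n$, contributing at most $(m-1)J$ — plus the blocks $w_1,\dots,w_{m-1}$ contribute at most their total length $\sum_{j<m} k_j|A|^j$, a constant depending only on $m$. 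Hence
\[
\occ{v[1,n]}{u} \;\leq\; |A|^{-m}\sum_{j=m}^{J} k_j|A|^j \;+\; (m-1)J \;+\; O_m(1).
\]

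Now divide by $n$. Since $n \geq \sum_{j=m}^{J-1} k_j|A|^j$ and, by the geometric domination, $\sum_{j=m}^{J} k_j|A|^j \leq D \cdot \big(\sum_{j=m}^{J-1} k_j|A|^j\big)$ for an absolute constant $D$ (e.g.\ $D = 2|A|$ works once $J$ is large, because $k_J|A|^J / (k_{J-1}|A|^{J-1}) \to |A|$), the main term of $\occ{v[1,n]}{u}/n$ is at most $D\,|A|^{-m}$. For the error terms: $(m-1)J / n$ tends to $0$ because $n$ grows like $J|A|^J$ while the numerator is linear in $J$; and the $O_m(1)/n$ term vanishes as $n\to\infty$. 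Therefore $\limsup_{n\to\infty} \occ{v[1,n]}{u}/n \leq D|A|^{-m}$, and taking any $C > D$ satisfies the Piatetski--Shapiro condition uniformly in $u$, so $v$ is normal.

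The main obstacle, and the only place requiring care, is making the comparison $n \asymp k_J|A|^J$ precise and uniform in $J$ — i.e.\ checking that the partial geometric-type sum $\sum_{j=m}^{J-1}k_j|A|^j$ is within a fixed constant factor of its last term $k_{J-1}|A|^{J-1}$ regardless of how close position $n$ sits to the start of block $w_J$. This is where the linearity of $k_j$ matters: it guarantees $k_j|A|^j$ grows at essentially the geometric rate $|A|$, so the tail is controlled; a sub-geometric growth of $k_j$ would still work, but a super-geometric one would not, which is exactly why the hypothesis restricts $k_n$ to be linear. Everything else is bookkeeping of boundary effects and low-order blocks, which disappear in the limit.
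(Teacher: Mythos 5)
Your proof is correct and follows essentially the same route as the paper: the Piatetski--Shapiro criterion combined with the per-block occurrence upper bound for perfect necklaces and the observation that the block lengths $k_j|A|^j$ grow at the geometric rate $|A|$, so the cumulative length up to block $J$ is within a bounded factor of that up to block $J-1$. You are in fact more explicit than the paper about the boundary-straddling occurrences and the low-order blocks (which the paper's one-line estimate silently absorbs), at the cost of a constant $D$ slightly larger than the paper's $C=|A|$ --- immaterial for the criterion.
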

\begin{proof} 
Let $M(0)=0$ and for $m\geq 1$, $M(m)=\sum_{i=1 }^m k_i |A|^i $. 
\\
Fix $N$.
Let $m$ be such that  $M(m-1)<N\leq M(m)$. 
Then, for every $u$ of length $\ell$,
\begin{align*}
\occ{v[1,N]}{u}
&\leq\occ{v[1,M(m)]}{u}
\\
&\leq \sum_{i=1}^{m}   \occ{v[M(i-1)+1, M(i) ]}{u} + \ell-1
\\
&\leq \sum_{i=1}^{m}   k_i |A|^{i-\ell}  + \ell-1
\\
&\leq k_m |A|^{m+1} |A|^{-\ell} + m\ell.
\end{align*}
Since  $k_n$ is linear in $n$, for every $n$ we have  $k_{n}/k_{n-1}$ is a constant $c$.\\
Then, using $M(m)\leq k_m|A|^{m+1}$  and $M(m-1)\geq k_{m-1}|A|^{m-1}$, 
\begin{align*}
	\limsup_{N\rightarrow\infty}   \frac{\occ{v[1,N]}{u}}{N}  
&\leq \limsup_{m\rightarrow\infty} \frac{\occ{v[1,M(m)]}{u}}{M(m-1)}
\\
&\leq\limsup_{m\rightarrow\infty}  \frac{k_m |A|^{m+1} |A|^{-\ell} + m\ell}{ k_{m-1}|A|^{m-1}}
\\
& \leq c|A|^{2} |A|^{-\ell}.
\end{align*}
Piatetski-Shapiro  theorem (Proposition~\ref{hotspot})
 holds with~$C=c|A|^2$ and  $v$ is normal to alphabet~$A$.
\end{proof}

Actually  Proposition \ref{prop:perfect-normal} holds  for 
$k_n$ being polynomial in $n$ and the same proof applies. 
The concatenation of nested perfect necklaces of exponentially 
increasing order also yields normal sequences,
but this can not be  proved using Proposition \ref{prop:perfect-normal}.
By Wall's thesis~\cite{Wall},  normal numbers are exactly those real numbers $x$   for which 
 $(b^n x\mod 1)_{n\geq 1}$ is uniformly distributed, 
which means that  the discrepancy of the first $N$ terms
\[
D_{N}\left((b^n x \mod 1\right)_{n\geq 0})=\!\! 
\sup_{\gamma\in[0,1)} \left\vert  \frac{1}{N} \left| \{n\leq N  : ( b^n  x \mod 1) \} <\gamma\}\right|   - \gamma \right\vert
\]
 goes to $0$ as $N$ goes to infinity.
For sequences of the form $(b^n x \mod 1)_{n\geq 1}$
 the smallest known discrepancy of the first $N$ terms is $O((\log N)^2/N)$, 
see~\cite{Levin1999,Bugeaud2012}.
Expansions made of nested perfect necklaces of exponentially 
increasing order   yield  real numbers $x$ with this property.

\begin{remark}[\protect{\cite[Theorem 1]{BC2019}}] \label{thm:nested-normal}
 Let $b$ a prime number.
The  base-$b$ expansion of the number defined by M.Levin 
using  Pascal triangle matrix modulo~$2$ 
is the concatenation of  nested $(2^d,2^d)$-perfect necklaces
  for  $d=0,1,2,\ldots$.  
And  for every   number~$x$ whose base-$b$  expansion is the concatenation of nested
  $(2^d,2^d)$-perfect necklaces for  $d = 0,1,2\ldots$, $D_N((b^n x \mod 1)_{n\geq 0})$ is $O((\log N)^2/N)$.
\end{remark}

In general, the   discrepancy associated to  the concatenation of 
$(n,k)$-perfect necklaces has not been studied.
One exception is the discrepancy associated to the concatenation $n$-ordered necklaces
which is exactly the discrepancy associated to Champernowne's sequence~\cite{Champernowne}
proved in~\cite{schiffer}, see also \cite{Bugeaud2012,DrmotaTichy1997}.

\begin{remark}[\protect{\cite[Theorem 1]{schiffer}}] \label{remark:schiffer}
The number $x$ whose base $b$ expansion is
the concatenation of the $n$-ordered necklaces for $n=1, 2, \ldots$
$D_N((b^n x \mod 1)_{n\geq 0})$ is $O(1/(\log N))$.
\end{remark}

\section{Free insertion} \label{sec:free}

\subsection{Tools to prove Theorem~\ref{thm:free}} \label{sec:tools-free}

Consider alphabets $A$ and $\widehat A=A\cup\{\sigma\} $ for $\sigma$ not in $A$.
Since the length and lexicographic order  on words over alphabet $A$
respects  the length and lexicographic order  on words over~$\widehat A$,
by inserting suitable symbols  in  suitable positions  in each $n$-ordered necklace over $A$
we obtain each $n$-ordered necklace over~$\widehat A$.
For example,  for $A=\{0,1\}$ and $\widehat A=\{0,1,\sigma\}$,
\[
\footnotesize
\begin{array}{lll}
0\, 1\color{red}_{_{\tiny \blacktriangle} }\!\! \color{black}   \ \ 
00\ 01\! \color{red}_{_{\tiny \blacktriangle} }\!\! \color{black}10\ 11\  \ \ \ 
000\ 001\! \color{red}_{_{\tiny \blacktriangle} }\!\! \color{black} 
010\ 011\! \color{red}_{_{\tiny \blacktriangle} }\!\! \color{black} 
100\ 101\! \color{red}_{_{\tiny \blacktriangle} }\!\! \color{black}
 110\ 111\ 0000\ 0001\ . . . 
\\\\
0\, 1\, \alert{\sigma} \ \ 
00\ 01\ \alert{0\sigma}\ 10\ 11\  \alert{1\sigma \ \sigma 0\ \sigma 1\ \sigma \sigma }\ \ 
000\ 001\ \alert{00\sigma}\ 010\ 011\  \alert{01\sigma\ 0\sigma 0\ 0\sigma 1\ 0\sigma \sigma}\
100\ 101\ \alert{10\sigma }\ 110\ 111\ \alert{11\sigma \ 1\sigma 0\ 1\sigma 1\ 1\sigma \sigma  }\\
\ \ \ \ \ \ \ \  \alert{ \sigma 00\ \sigma 01\ \sigma 0\sigma \ \sigma 10\ \sigma 11\ \sigma 1\sigma \ \sigma \sigma 0\ \sigma \sigma 1\ \sigma \sigma \sigma }\ \ \ \  0000\ 0001 ...
\end{array}
\]
Much more is true: for  any $(n,k)$-perfect necklace over alphabet  $A$
there is  an $(n,k)$-perfect necklace over $\widehat A$ such that 
the first is a subsequence of the second. 
This is immediate from the graph theoretical  characterization 
$(n,k)$-perfect necklaces  as Eulerian cycles on  astute graphs: 
$G_A(k,n-1)$  is a subgraph of $G_{\widehat A}(k,n-1)$, 
 and any cycle in an Eulerian graph can be embedded into a full Eulerian cycle. 
This  can be constructed with Hierholzer's algorithm 
for joining cycles together to create an Eulerian cycle of a graph. 
However this method does not  guarantee  that 
in the resulting $(n,k)$-perfect necklace over alphabet $\widehat A$,
there will be a small gap  between one occurrence of the symbol~$\sigma$ and the next.

The following lemma gives a  method to insert symbols in a $(n,n)$-perfect necklace 
ensuring   a {\em  small gap condition}.
The lemma   extends the work done for  de Bruijn sequences  in~\cite[Theorem 1]{BCortes}.

\begin{lemma}[Main lemma]\label{thm:2to3}
Assume alphabets $A$ and $\widehat A=A\cup\{\sigma\} $ for $\sigma$ not in $A$.
For every $(n,n)$-perfect necklace~$[v]$   over  alphabet $A$
there is a $(n,n)$-perfect necklace~$[\widehat v]$ over alphabet $\widehat A$
such that $v$  is a  subsequence of $\widehat v$.  
Moreover, for each such $[v]$ there is  $[\widehat v]$ satisfying   that 
in between any  occurrence of the symbol~$\sigma$ and the next there are
at most~$n+2|A| -2$ other symbols.
\end{lemma}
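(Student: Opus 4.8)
The plan is to realize both $[v]$ and the desired $[\widehat v]$ as Eulerian cycles in astute graphs, using Proposition~\ref{prop:Eulerian}: the $(n,k)$-perfect necklace $[v]$ over $A$ corresponds to an Eulerian cycle $C$ in $G_A(n-1,k)$, and we want to extend $C$ to an Eulerian cycle $\widehat C$ in $G_{\widehat A}(n-1,k)$, which will read off as a $(n,k)$-perfect necklace $[\widehat v]$ over $\widehat A$ containing $v$ as a subsequence. The edges of $G_{\widehat A}(n-1,k)$ that are not in $G_A(n-1,k)$ are exactly those incident to a vertex $(w,m)$ with $w$ containing at least one occurrence of $\sigma$; these are the ``new'' edges that must be inserted. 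The first (easy) half of the statement — existence of \emph{some} extension — is immediate from the remark already made in the text (any cycle in an Eulerian graph embeds in a full Eulerian cycle), but the point is to do the splicing carefully enough to control the gaps between consecutive $\sigma$'s.

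First I would organize the new edges into short closed walks (local ``detours'') that can be spliced into $C$ at appropriate vertices, and bound the number of $A$-symbols traversed between entering such a detour (which necessarily begins by emitting a $\sigma$, since we leave $G_A$ only through a $\sigma$-labeled transition) and the next $\sigma$. The key combinatorial fact is that from any vertex $(w,m)$ of $G_{\widehat A}(n-1,k)$ one can reach a vertex whose word component contains a $\sigma$ within at most $n-1$ steps (just shift a $\sigma$ in), and symmetrically one can return to the $A$-part, or to another $\sigma$-edge, within a bounded number of steps. I would then show that after visiting a $\sigma$, within at most roughly $n + 2|A| - 2$ further symbols the construction forces another $\sigma$: the $n-1$ comes from the window length (a word of length $n-1$ must eventually ``forget'' a $\sigma$ and has length $n-1$, plus the symbol being read gives $n$), and the $2|A|-2$ term is the same kind of slack that appears in Proposition~\ref{prop:sigma} and in the de Bruijn case of~\cite[Theorem~1]{BCortes} — it accounts for how many consecutive purely-$A$ transitions one may be forced to take before a $\sigma$-edge becomes available or required in the cycle being built. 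Concretely, I would mimic the argument of~\cite[Theorem~1]{BCortes}: traverse $C$, and whenever we are at a vertex from which an unused $\sigma$-edge departs (or from which a short detour into the unused $\widehat A$-portion is possible), take it, complete the forced local detour through new edges, and splice back; a counting argument shows no more than $n+2|A|-2$ non-$\sigma$ symbols elapse between consecutive $\sigma$'s.

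The main obstacle I expect is not the existence of the extension but simultaneously (i) guaranteeing that \emph{every} new edge of $G_{\widehat A}(n-1,k)$ gets used exactly once — so that the result is genuinely $(n,k)$-perfect and not merely a supersequence — and (ii) keeping the gap bound uniform across the whole cycle, including near the ``seams'' where detours are spliced in and near the wrap-around of the necklace. The way I would handle this is to process the $\sigma$-containing vertices in a disciplined order (e.g.\ by the position of the leftmost $\sigma$ in the word, or by an Eulerian sub-cycle decomposition of the sub-digraph induced on $\sigma$-containing vertices together with their in/out $\sigma$-edges), so that each detour is itself short and the detours collectively cover all new edges; then the gap estimate reduces to the local estimate for a single detour plus the background estimate for the $A$-part, which is essentially Proposition~\ref{prop:sigma} with $|A|$ replaced by $|\widehat A| = |A|+1$. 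Finally I would check the boundary/periodicity cases (the necklace wrap-around, and degenerate small $n$ or $k$) separately, as in the cited de Bruijn argument, since these are where off-by-one errors in the $n + 2|A| - 2$ bound typically hide.
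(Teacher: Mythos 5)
Your overall framework matches the paper's: both realize $[v]$ as an Eulerian cycle in an astute graph, view the missing edges of $G_{\widehat A}$ as the material to be spliced in as short closed detours each beginning with a $\sigma$-edge, and both cite the de Bruijn precedent of~\cite{BCortes}. However, there is a genuine gap at exactly the point you defer to ``a counting argument'': the claim that consecutive $\sigma$'s end up at most $n+2|A|-2$ symbols apart. The paper does not obtain this greedily. It partitions the new edges into \emph{petals}, one petal per vertex $(u,m)$ of $G_A(n,k)$, organized by a spanning tree (the Petals tree) of the graph of $\sigma$-containing necklaces of length $n+1$; this simultaneously guarantees that every new edge is used exactly once and that inside a petal any $n+1$ consecutive edges contain a $\sigma$-edge (branching between necklace-cycles is only possible through a $\sigma$-edge). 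That handles your obstacle (i) and the ``$n$'' part of the bound, and your suggestion of an Eulerian sub-cycle decomposition of the $\sigma$-part is a plausible substitute only if you also prove this within-detour spacing property, which you do not.

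The more serious omission concerns the ``$2|A|-2$'' part. The paper cuts the Eulerian cycle of $G_A(n,k)$ into $k|A|^n$ consecutive sections of $|A|$ edges each and then invokes Hall's marriage theorem on the bipartite ``Distribution graph'' (vertices versus sections) to obtain a perfect matching, so that exactly one petal is inserted in each section; two consecutive insertion points are then at most $2|A|-1$ edges apart, uniformly, including across the wrap-around. Your strategy of taking an unused $\sigma$-edge ``whenever we are at a vertex from which one departs'' does not yield this: a vertex occurs $|A|$ times in the cycle but those occurrences need not be spread out, so after the early portion of the cycle has consumed all available detours, long stretches with no remaining $\sigma$-opportunity can occur, and no uniform gap bound follows. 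The perfect-matching step is the key idea your proposal is missing, and without it the quantitative claim of the lemma is not established.
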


Notice that  the $n$-ordered necklace  over alphabet   $\widehat A$  
fails the small gap condition required in  the Main Lemma~\ref{thm:2to3}.
For instance,  for $A=\{0,1\}$, $\widehat A=\{0,1,\sigma \}$
 and  $n=2$, there are occurrences of $\sigma$ with more than $n+2|A| -2=4$ symbols in between:
\medskip

$[v]=[\   00\ 01_{\color{red}\blacktriangle}  10\ 11_{\color{red}\blacktriangle}]$  
\medskip

$[\widehat v]=[\underbrace{00\ 01\ \alert{0}}_{5\ symbols}\!\! \alert{\sigma } \underbrace{10 \ 11\ \alert{1}}_{5\ symbols}\!\!\alert{\sigma \ \sigma 0 \ \sigma 1\ \sigma \sigma }]$ 
\medskip
\\
However this other  insertion satisfies the small gap condition:
\medskip

$[\widehat v] =[ \underbrace{0}_{}\alert{\sigma } \ 
\underbrace{\alert{0}0 \ 01}_{4\ symbols}\ 
\alert{\sigma }\!\!\!\!\!
\underbrace{\alert{1}\   10}_{3\ symbols}\ 
\alert{\sigma \sigma }\ \underbrace{\alert{0} 1}_{2\ symbols}\ 
\alert{\sigma \sigma }\  \underbrace{\alert{1}1}_{2+ 1\ symbols}]$.
\medskip

To prove the Main Lemma~\ref{thm:2to3}  we start with a $(n,n)$-perfect necklace $[v]$ over alphabet $A$,
 we consider an Eulerian cycle in $G_{A}(n-1,n)$ that corresponds to $[v]$
and we  extend it to  an Eulerian cycle in  graph
$G_{\widehat A}(n-1,n)$. The $(n,n)$-perfect necklace that describes  this cycle is the wanted $[\widehat v]$. 

Since for every pair of  positive integers $n,k$, 
  $G_A(n,k)$ is a subgraph of $G_{\widehat A}(n,k)$  the following is well defined.

\begin{definition}[Augmenting graph]
The augmenting graph $X_{\widehat A}(n-1,n)$ is the directed graph $(V, E)$ where 
\begin{align*}
V &=\widehat A^{n-1} 
\\
E &=
\left\{ 
\begin{array}{ll}
\big( (au,m),(ub,(m+1)\!\!\!\!\mod n) \big) : &
 u\in \widehat A^{n-2},  a,b\in \widehat A,
\Big( \occ{au}{\sigma}>0 \text{ or } \occ{ub}{\sigma}>0 \Big) 
\\
& m\in\{0,\ldots , n-1\} 
\end{array}
\right\}
\end{align*}
\end{definition}

Each vertex  in $X_{\widehat A}(n-1,n)$  
that is also a vertex in  $G_A(n-1,n)$ 
has exactly one incoming edge and exactly one outgoing edge.
This outcoming edge  is  associated to  new symbol~$\sigma$.

We say that two   cycles are  disjoint  if they have  no common edges. 
To prove the Main Lemma~\ref{thm:2to3} we  construct an 
 Eulerian cycle in $G_{\widehat A}(n-1,n)$  by joining 
the given  Eulerian cycle in $G_A(n-1,n)$ with disjoint cycles 
of the augmenting graph $X_{\widehat A}(n-1,n)$ that we   call {\em petals}.
These petals must exhaust the augmenting graph~$X_{\widehat A}(n-1,n)$.
Recall that $\theta$ is the rotation operation on words that shifts one position to the right.

\begin{definition}[Necklaces on pairs $(v,m)$]
Assume alphabet  $\widehat A$ and a positive integer $n$.
For $u\in \widehat A^n$ and $m$  between $0$ and $n-1$, the necklece $[(v,m)]$ is :
\[
 [(u,m)]=\big\{(u,m), (\theta(u), (m+1) \!\!\mod n),  (\theta^2(u), (m+2) \!\!\mod n), \ldots, (\theta^n(u), (m+n-1) \!\!\mod n)\big\},
\]
\end{definition}

\begin{proposition}
The set of edges in $G_{\widehat A}(n-1,n)$ can be partitioned in disjoint 
simple cycles identified by the necklaces of pairs of words length~$n-1$
 in alphabet $\widehat A$ and $n$ congruence classes.
\end{proposition}
\begin{proof}
Let $u\in \widehat A^{n-1}$, let $a\in \widehat A$ and  let $m\in\{0, \ldots n-1\}$.
Consider the elements in $[(ua,m)]$ and the sequence of  edges
\[
(ua,m) \to (\theta({ua}), (m+1)\!\!\mod n)\to (\theta^2({ua}),(m+2)\!\!\mod n)\to \ldots\to (\theta^{n}({ua}),(m+n)\!\!\!\mod n)
\]
Observe that 
the vertices related by these  edges are pairwise different
except  
\[
(ua,m)= (\theta^{n}({ua}),(m+n)\!\!\!\mod n).
\]
Thus, these $n$ edges  form a simple cycle in $G_{\widehat A}(n-1,n)$.
For each congruence class $m$, the  partition of the set of  words of length~$n$ 
in the equivalence classes given by their  rotations 
determines a partition of the set of edges in $G_{\widehat A}(n-1,n)$ into disjoint simple cycles.
\end{proof}

\begin{definition}[Graph of necklaces]
1. Define  $C_{\widehat A}(n,n)$  as the graph $(V, E)$ where
\begin{align*}
V &=\{ [(u,m)] : u\in \widehat A^n, m=0, \ldots , n-1\}
\\
E&= \{ (x,y):     \mbox{there is } (au,m)\in x \mbox{ and  there is  } (uc, (m+1)\!\!\mod n) \in y, \mbox{ for } a,c\in\widehat A \}
\end{align*}
2. Define  the graph    $\widehat C_{\widehat A}(n,n)$ as the subgraph  of  $C_{\widehat A}(n,n)$
 whose  vertices contain at least one occurrence of the symbol~$\sigma$.
\end{definition}

We are ready to define a petal for each  vertex in $G_A(n-1,n)$.
A petal  is a union of disjoint cycles in $X_{\widehat A}(n-1,n)$  
 which are identified by  the necklaces of length~$n$ that have at least one occurrence of  symbol~$\sigma$.
For this identification  we consider the graph  $\widehat C_{\widehat A}(n,n)$.

\begin{definition}[Petal for vertex in $G_A(n-1,n)$]
A {\em  petal} for  vertex $(u,m)$ in $G_A(n-1,n)$  is a cycle in $ X_{\widehat A}(n-1,n)$
induced by a  subgraph of  $\widehat C_{\widehat A}(n,n)$ that  contains the necklace $[(u\sigma,m)]$.
The petal  for vertex $(u,m)$ in $G_A(n-1,n)$  
starts at  the necklace~$[(u\sigma,m)]$ in $\widehat C_{\widehat A}(n,n)$.
\end{definition}

To exhaust $ X_{\widehat A}(n-1,n)$  we partition it in petals.
For this we define  a {\em Petals tree}.
Recall that a tree  is a directed acyclic graph  with exactly one path from the root to each vertex.

\begin{definition}[Petals tree]
A  Petals tree  for  $\widehat C_{\widehat A}(n,n)$ consists  of  a root~$[r]  $ 
 that branches out in a  subgraph of~$\widehat C_{\widehat A}(n,n)$
including all its vertices.
It  has    has height $n$, 
 the vertices at  distance~$d$ to the root have exactly $d$ occurrences 
of the new symbol~$\sigma$, for $d=0, \ldots, n$. 
The root $[r]$  is a necklace that corresponds to an Eulerian cycle in~$G_A(n-1,n)$.
\end{definition}
There are many Petals trees for  $\widehat C_{\widehat A}(n,n)$, any one is good for our purpose.
A Petals tree  can be obtained by any algorithm that finds a  spanning tree of a graph, 
as  Kruskal's greedy algorithm for the minimal spanning tree,
or it can be constructed using  the classical Breath First search  on~$ \widehat C_{\widehat A}(n,n)$.

We now focus on how to insert the petals  in the given Eulerian cycle 
in $G_A(n-1,n)$
 but satisfying the small  gap condition.
Since each vertex $u $ of $G_A(n-1,n)$ occurs exactly~$|A|$ times in the given Eulerian cycle, 
we have~$|A|$ many possibilities to place the petal for $u$.  
To determine where to place it, we divide the given Eulerian cycle 
in as many  consecutive {\em sections} as the number of vertices in the graph $G_A(n-1,n)$. 
We say that an   Eulerian  cycle is pointed when there is a designated first edge.

\begin{definition}[Section of a cycle]
For a pointed  Eulerian cycle in $G_A(n-1,n)$  given by the  sequence of  edges  
 $e_1, \ldots, e_{n|A|^{n}}$
and  an  integer $j$  such that $0\leq j<n |A|^{n-1}$,
the $j$-th section of the cycle is  the  sequence of the $|A|$ vertices that are 
heads  of $e_{j|A|}, \ldots ,e_{j|A|+|A|-1}$ .
\end{definition}

We would like to choose one vertex from each section to place a petal. 
The problem is that each vertex  occurs  $|A|$ times in the Eulerian cycle  
but not necessarily at $|A|$ different sections.
We pose  a {\em matching} problem.

\begin{definition}[Distribution graph]
Given pointed   Eulerian cycle  in  $G_A(n-1,n)$  
the   Distribution graph $D_A(n-1,n)$ is a $|A|$-regular bipartite graph,
one part consists of the  vertices in  $G_A(n-1,n)$,
the other part consists of the  sections of  the Eulerian cycle.
There is an  edge  from a vertex $u$ in $G_A(n-1,n)$ to a section $j$ if $u$ belongs to the section~$j$.
\end{definition}

A  matching  in  a Distribution graph is a set of edges such that no two edges share a common vertex. 
A vertex is  matched if it is an endpoint of one of the edges in the matching.
 A {\em perfect matching} is a matching that matches all vertices in the graph.

\begin{proposition}\label{prop:matching}
For every  Distribution graph $D_A(n-1,n)$ there is a perfect matching.
\end{proposition}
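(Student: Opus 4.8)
The plan is to apply a standard matching theorem for regular bipartite graphs. Recall that the Distribution graph $D_A(n,k)$ is bipartite with one part $P_1$ being the set of vertices of $G_A(n,k)$ and the other part $P_2$ being the set of sections of the given pointed Eulerian cycle. First I would observe that the two parts have the same cardinality: by Definition of the astute graph, $G_A(n,k)$ has $k|A|^n$ vertices, and an Eulerian cycle in $G_A(n,k)$ has $k|A|^{n+1}$ edges, which split into $k|A|^n$ consecutive blocks of $|A|$ edges each, so there are exactly $k|A|^n$ sections. Thus $|P_1|=|P_2|=k|A|^n$.

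Next I would check that $D_A(n,k)$ is $|A|$-regular, as asserted in its definition, and hence reprove this cleanly for use in the argument. Each section consists of the heads of $|A|$ consecutive edges of the Eulerian cycle; counted with multiplicity these are $|A|$ vertices, so each section node in $P_2$ has degree $|A|$ (an edge to each vertex appearing as one of those heads; if a vertex appears twice in a section one keeps a single edge but — to stay honest — I would actually work with the bipartite multigraph in which parallel edges are allowed, which is genuinely $|A|$-regular). On the other side, each vertex of $G_A(n,k)$ has in-degree $|A|$ in the astute graph, so it is the head of exactly $|A|$ edges of the Eulerian cycle, each lying in exactly one section; hence each vertex node in $P_1$ has degree $|A|$ in the multigraph. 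So $D_A(n,k)$ is an $|A|$-regular bipartite multigraph with equal-size parts.

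Then I would invoke the classical fact that every $r$-regular bipartite (multi)graph with $r\geq 1$ has a perfect matching. The quickest route is K\H{o}nig's edge-colouring theorem: an $r$-regular bipartite multigraph has its edge set partitioned into $r$ perfect matchings; taking any one of them gives the desired perfect matching. Alternatively one can verify Hall's condition directly: for any set $S\subseteq P_1$, the edges incident to $S$ number $r|S|$ and each lands in $N(S)$, contributing at most $r$ to the degree count on that side, so $r|N(S)|\geq r|S|$, i.e. $|N(S)|\geq|S|$; Hall's theorem then yields a matching saturating $P_1$, which is perfect since $|P_1|=|P_2|$. I would present the Hall-condition version, since it is self-contained and the regularity has already been established.

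The only real subtlety — not a deep obstacle, but the point that needs care — is the multiplicity issue: a vertex of $G_A(n,k)$ may occur more than once within a single section of the Eulerian cycle, so the ``graph'' $D_A(n,k)$ as literally defined (with at most one edge per pair) need not be $|A|$-regular. I would handle this by phrasing everything in terms of the natural bipartite multigraph carrying one edge per (edge of the Eulerian cycle), which is manifestly $|A|$-regular, obtaining a perfect matching there; a matching in the multigraph descends to a matching in $D_A(n,k)$ (collapsing parallels), and it is still perfect. This gives the statement as needed for the subsequent placement of one petal per section.
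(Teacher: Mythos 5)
Your proposal is correct and follows essentially the same route as the paper: both verify Hall's condition by counting edge incidences ($|A|$ per vertex on one side, at most $|A|$ absorbed per section on the other) and then note that equal part sizes make the resulting matching perfect. Your explicit handling of the multiplicity issue (a vertex appearing twice in one section) is a welcome clarification of a point the paper's edge-counting argument treats only implicitly, but it does not change the substance of the proof.
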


\begin{proof}
Let $D$ be a finite bipartite graph consisting of 
  two disjoint  sets  of vertices $X$ and $Y$ with
 edges that  connect a vertex in $X$ to a vertex in $Y$.
For a subset $W$ of  $X$, let $N(W)$ be the  set of all vertices in $Y$ adjacent to some element in~$W$. 
Hall's marriage theorem~\cite{hallph} states that there is a matching that entirely 
covers $X$ if and only if for every subset $W$ in $X$, $|W| \leq |N(W)|$.
Consider a Distribution graph $D_A(n-1,n)$ and call   $X$ to   the set of vertices $G_A(n-1,n)$ and  $Y$ to  the set of sections.
For any  $W\subseteq X$ such that $|W| = r$, the sum of the out-degree of these $r$ vertices is~$r|A|$. 
Given that the in-degree for any vertex in $Y$ is~$|A|$, we have that $|N(W)| \geq r$. 
Then, there is a matching that entirely covers $X$. 
Furthermore,  since the number of vertices is equal to the number of sections, 
 $|X| = |Y|$,  the matching is perfect.
\end{proof}

To obtain a perfect matching in a Distribution 
graph we can use any method to compute the maximum flow in a network. 
We define the flow  network by 
adding  two vertices  to the  Distribution graph,  the source and the sink.
Add an edge from the source  to each vertex in $X$ 
and add an edge from each vertex in $Y$ to the sink. 
Assign capacity~$1$ to each of the edges of the flow network. 
The maximum flow of the network is $|X|$.
This flow has the edges of a perfect match.
\medskip

We have the needed tools for the awaiting proof.

\begin{proof}[Proof of  the Main Lemma ~\ref{thm:2to3}]
Assume   $[v]$ is a $(n,n)$-perfect necklace over alphabet $A$.
We construct a $(n,n)$-perfect necklace $[\widehat v]$ over alphabet $\widehat A$.
By Proposition~\ref{prop:Eulerian},
we need to construct an Eulerian cycle in~$G_{\widehat A}(n-1,n)$.
Consider  a pointed Eulerian cycle in $G_A(n-1,n)$  with starting edge determined by~$v$ 
and  consider its $n|A|^{n-1}$ sections.

By Proposition~\ref{prop:matching}  we  
choose one vertex in each section according to a perfect matching.
Fix a Petals tree for $C_{\widehat A}(n,n)$ with root $[v]$. 
The construction traverses the graph $G_{\widehat A}(n-1,n)$ 
until it obtains an Eulerian cycle in this graph.
The idea is to  insert one petal in each section of the Eulerian cycle in $G_{A}(n-1,n)$. 
All the sections are considered, one after the other, until all sections have been considered.
The construction starts  at section~$0$,
and at each step of the construction there is a current section.

The traversal of of $G_{\widehat A}(n-1,n)$ starts at the  starting vertex of 
the pointed  Eulerian cycle in $G_{A}(n-1,n)$ .
Each time an edge is traversed, the current vertex becomes the edge's endpoint.
Let $(u,m)$ be the current vertex.

{\em Case $(u,m)$ is a vertex in $G_A(n-1,n)$}:
If~$(u,m)$ is the chosen vertex in the current section and 
 the petal for~$(u,m)$, 
which starts with~$[u\sigma,m]$ in $\widehat C_{\widehat A}(n,n)$, 
has not been inserted yet then insert it now:
traverse the edge  that adds the symbol~$\sigma$
and continue traversing  the edges in $G_{\widehat A}(n-1,n)$ corresponding to the petal for $(u,m)$.
If the petal for $(u,m)$ has already been inserted
or $(u,m)$ is not a chosen vertex
then  continue with the traversal of edges corresponding to  the current section.
If the current section is exhausted, the next section becomes the current section.

{\em Case  $(u,m)$  is not a vertex in~$G_A(n-1,n)$}:
If   $[u\sigma,m]$ is a child  of the current  vertex in the Petals tree and  
it  has not been inserted yet, then 
traverse the edge  that adds the symbol~$\sigma$
and continue traversing  the edges in $G_{\widehat A}(n-1,n)$
 corresponding to the petal for $(u,m)$.
Otherwise continue with the traversal of the edges corresponding
 to  petal that $(u,m)$ was already part of.

Finally, we prove  that the construction of satisfies the minimal gap condition.
We must prove that  in $[\widehat v]$
in between any occurrence of~$\sigma$  and the next 
there are at most $n+2|A|-2$ symbols.
Obviously each section of the Eulerian cycle in  $G_{A}(n-1,n)$
 has no occurrence of the symbol~$\sigma$.
A petal for a vertex~$(u,m)$ in $G_A(n-1,n)$  
necessarily starts with  the edge that adds the symbol~$\sigma$ right after~$u$,
and this petal corresponds to a path in $\widehat C_{\widehat A}(n,n)$.

Since each section has~$|A|$ edges,  if we place one petal in each section then 
two consecutive  petals are at most $2|A|-1$  edges away. 
Pick a section and let $(u,m)$ be the chosen vertex  and  let $(u',m')$
 be the chosen vertex in the next section.
In case  the   petal  for  $(u,m)$ corresponds just the single vertex  $[u\sigma,m]$ in $\widehat C_{\widehat A}(n,n)$, 
then  it is  a cycle  in $G_{\widehat A}(n-1,n)$ consisting of  exactly~$n$ edges.
So, in between the  occurrence of $\sigma$ in the petal for $(u,m)$ and the first occurrence
 of $\sigma$ in the petal for $(u',m')$  there are at most~$n-1 + 2|A| -1= n + 2|A| -2$ other symbols.
In  case  the petal  for $(u,m)$ consists of   more than one  vertex in $\widehat C_{\widehat A}(n,n)$
then,   before completing the traversal of the $n$ edges  corresponding to  $[u\sigma,m]$,
the construction 
\begin{enumerate}
\item first branches out to another vertex in $\widehat C_{\widehat A}(n,n)$ and

\item then traverses  the corresponding edges   in $G_{\widehat A}(n-1,n)$
that relate vertices having at least one occurrence of the symbol~$\sigma$,

\item and returns  back to $(u,m)$, necessarily from a vertex $(\sigma w, (m-1) \mod n)$, where
$w$ is the  prefix of $u$ of length $n-2$.
\end{enumerate}
So,  in between the last   occurrence of $\sigma$ in the petal for $(u,m)$ 
and the first occurrence
 of $\sigma$ in the petal for $(u',m')$  there are at most~$n-1 + 2|A| -1=n+2|A|-2 $ other symbols.

It remains to argue  what happens inside a petal.
The  vertices in $\widehat C_{\widehat A}(n,n)$ correspond to edges in the 
 $G_{\widehat A}(n-1,n)$  that relate vertices with at
  least one occurrence of the symbol $\sigma$.
This ensures that in between any two successive occurrences of $\sigma$  inside
 a petal  there are at most $n-1$ symbols.

We conclude that in the   traversal of  $G_{\widehat A}(n-1,n)$  
 in between any occurrence of $\sigma$ and the next there 
are at most~$n+ 2|A| -2$ other symbols.
\end{proof}
\medskip

\begin{example}
Consider  $A=\{0,1\}$ and $\widehat A=\{0,1,\sigma \}$. 
Let $[v]$ the  $n$-ordered necklace   for $n=2$,
\[
[v]=[00\ 01\ 10\ 11]
\]  
Fix  an  Eulerian cycle for  $[v]$ in~$G_A(1,2)$.
\medskip

\noindent
Since $G_A(1,2)$ has $4$ vertices, divide it  in $4$ sections:

Section $0$   contains the vertices 
$(0,0 )$ and $(0,1)$

Section $1$   contains the  vertices 
$(0,0)$ and  $(1,1)$.

Section $2$  contains the  vertices 
$(1, 0 )$  and     $(0,1 )$.

Section $3$  contains the  vertices  $(1,0)$ and  $(1,1)$.
\medskip

The following choice gives a perfect match:

Section $0: (0,0)$,  
Section $1: (1,1)$, 
Section $2: (0,1)$, 
Section $4: (1, 0)$.

\newpage


Consider the following Petals tree with root $[r]=[00011011]$,

\begin{center}
\begin{tikzpicture}[sibling distance=8em,
  every node/.style = {shape=rectangle, rounded corners,
    draw, align=center,
    top color=white, bottom color=gray!20}]]
  \node {[r]}
    child { node {$[0\sigma ,0]$} }
    child { node {$[0\sigma ,1] $}
               child {node{$[\sigma \sigma ,0] $}}
             }
    child { node {$[1\sigma ,0]$} 
        child {node{$[\sigma \sigma ,1] $}}
            }
    child { node {$[1\sigma,1]$}};
\end{tikzpicture}
\end{center}

This Petals tree has $4$ branches, each one  is a petal for a vertex in $G_A(1,2)$:
\begin{enumerate}
\item The first branch is a petal for $(0,0)$,
It  results in the sequence $\sigma 0$ to be inserted 
(right after the $0$ that appears at an  even position).

\item The second branch is a petal for $(0,1)$.
It is  the join of two vertices in the tree, 
which results in  the  sequence $\sigma \sigma 0$ to be inserted 
 (right after the $0$ that appears at an  odd position).

\item The third branch is a petal for $(1,0)$.
It   results in the   sequence $\sigma \sigma 1$ to be inserted
 (right after the $1$ that appears at an  even position).

\item The fourth branch  is a petal for $(1,1)$.
It results in the   sequence $\sigma 1$ to be inserted 
 (right after the $1$ that appears at an odd position).
\end{enumerate}

The construction follows the Eulerian cycle for $[v]$ in $G_A(1,2)$
and,  in each section it inserts the petal for the chosen vertex for a perfect match, immediately after it.
Thus,\\
 in section $0$ it inserts the edges for $[0\sigma,0]$ at $(0,0)$,\\
 in section $1$ it inserts the edges for $[1\sigma,1]$ at $(1,1)$,\\
 in section $2$ it inserts the edges for $[0\sigma,1]$ and $[\sigma\sigma,0]$ at $(0,1)$,\\
 in section $3$ it inserts the edges for $[1\sigma,0]$and $[\sigma\sigma,0]$ at $(1,0)$.
\\
The result is  the $(1,2)$-perfect necklace $[\widehat v]$ over alphabet $\widehat A$.
The inserted symbols are coloured:
\[
[\widehat v]= [0\color{red}\sigma  \ 0\color{black}0 \ 01\ \color{red}\sigma 1\ \color{black} \ 10\ \color{red}\sigma \sigma \ 0\color{black} 1\ \color{red}\sigma \sigma \ 1\color{black}1].
\]

It satisfies the small gap condition,
because between any occurrence of $\sigma$ and the next there are at most $4$ other 
symbols, which is less that the allowed because  $n=2$, $|A|=2$ and  
$n+2|A| - 2=5$ symbols.
\end{example}

\subsection{Proof of Theorem~\ref{thm:free}} \label{sec:proof-free}

Suppose $v\in A^\omega$ is the concatenation of $(n,n)$-perfect necklaces over alphabet $A$, 
for $n=1,2, \ldots$.
Apply  the Main Lemma~\ref{thm:2to3}  to each of these  $(n,n)$-perfect necklaces over alphabet $A$
and obtain $(n,n)$-perfect necklaces over alphabet $\widehat A$.
By Proposition~\ref{prop:perfect-normal},  their  concatenation 
 is normal to alphabet~$\widehat A$.

Fix  a positive integer $N$.
Recall that the length of a $(n,n)$-perfect necklace over alphabet $\widehat A$ is
$n |\widehat A|^n$.
Let $m$ be such that 
\[
\sum_{i=1}^{m-1} i |\widehat A|^i < N \leq \sum_{i=1}^{m} i |\widehat A|^i.
\]
Therefore, 
$
|\widehat A|^m < N, \text{ hence, } m\leq \log_{|\widehat A|}N.$

Consider  the possibilities for   the occurrences of $\sigma$  in $\widehat v$ 
just before and just after position~$N$.
We need to analyze  two cases.

{\em Case  they are both inside the same $(n,n)$-perfect necklace.}
The Main Lemma~\ref{thm:2to3} proved that the number of symbols in between is less than~$2 |A|+n$,
henceforth less than $ 2 |A|+m$.

{\em Case they are not in  same perfect necklace.}
Notice that in the proof of  the Main Lemma~\ref{thm:2to3}   
the  Eulerian cycle   over alphabet~$A$  is divided in  sections of size $|A|$,
independently of the value of $n$.
Assume  that~$N$ is in the~$(m,m)$-perfect necklace over $\widehat A$.
First suppose that    the occurrence  before position $N$ is in the $(m-1,{m-1})$-perfect necklace.
The construction in Lemma \ref{thm:2to3} ensures that~$\sigma$ occurs
 in the last  $|A|+m$ symbols of this necklace
and the next occurrence of $\sigma$ is  in the first $|A|+1$ symbols 
of the $(m,{m})$-perfect necklace.
Thus, in between these two occurrences of~$\sigma$ there are at most $2|A|+m-1$ symbols.
Now suppose that   the occurrence  after position $N$ is in the $(m+1,{m+1})$-perfect necklace.
Then, there is an occurrence of $\sigma$  in the last  $|A|+m+1$ symbols  of 
the $(m,m)$-perfect necklace and the next occurrence of $\sigma$ is in the first  $|A|+1$ symbols 
of the $(m+1,{m+1})$-perfect necklace.
Therefore, in between the two occurrences of $\sigma$ are at most~$2|A|+m$ symbols. 

Since $m\leq \log_{\widehat A} N$, it follows  that for every $N$ 
 the number of symbols 
in between these occurrences of $\sigma$  before an after position $N$
is at most~$ 2 |A|+\log_{|\widehat A|}N $.
This concludes the proof of Theorem~\ref{thm:free}.

\bibliographystyle{plain}
\bibliography{udt}

\bigskip
\bigskip

\noindent
Ver\'onica Becher\\
 Departmento de  Computaci\'on,   Facultad de Ciencias Exactas y Naturales\\
 Universidad de Buenos Aires \& ICC CONICET\\
Pabell\'on I, Ciudad Universitaria, 1428 Buenos Aires, Argentina
\\  
{\tt vbecher@dc.uba.ar}
 \bigskip

\end{document}